\renewcommand{\vec}[1]{{\mathchoice
                     {\mbox{\boldmath$\displaystyle{#1}$}}
                     {\mbox{\boldmath$\textstyle{#1}$}}
                     {\mbox{\boldmath$\scriptstyle{#1}$}}
                     {\mbox{\boldmath$\scriptscriptstyle{#1}$}}}}
\newcommand{\avv}{\alpha_{vv}}
\newcommand{\aww}{\alpha_{ww}}
\newcommand{\avw}{\alpha_{vw}}
\newcommand{\mat}[1]{\mathsf{\mathbf{#1}}}
\newcommand{\mip}[2]{{\langle{#1}, {#2} \rangle}_\hilb}
\newcommand{\GM}[2]{\mathcal{N}\left( {#1}, {#2}\right)}
\newcommand{\obs}{\vec{y}} 
\newcommand{\prcov}{\mat{\Gamma}_{\mathrm{pr}}}
\newcommand{\postcov}{\mat{\Gamma}_{\mathrm{post}}}
\newcommand{\noise}{\mat{\Gamma}_{\!\mathrm{n}}}
\newcommand{\ave}[2]{\mathbb{E}_{{#1}}\!\left\{ {#2} \right\}}
\newcommand{\R}{\mathbb{R}}
\newcommand{\F}{\mat{F}}
\newcommand{\HM}{\mat{H}}
\newcommand{\HMt}{\tilde{\mat{H}}}
\newcommand{\hilb}{\mathscr{H}}
\newcommand{\dparpr}{\dpar_{\mathrm{pr}}}
\newcommand{\dparpost}{\dpar^\obs_{\mathrm{post}}}
\newcommand{\priorm}{\upmu_\mathrm{pr}}
\newcommand{\postm}{\upmu_{\mathrm{post}}^{\obs}}
\renewcommand{\Xi}{X^{-1}}
\newcommand{\dpar}{\vec{m}}
\newcommand{\avey}[1]{\mathbb{E}_{{\vec{y}|\dpar}}\left\{ {#1} \right\}}
\newcommand{\DKL}{\mathrm{D}_{\mathrm{kl}}}
\newcommand{\tran}{{\mkern-1.5mu\mathsf{T}}}                
\newcommand{\ff}{\vec{f}}
\newcommand{\fft}{\vec{\tilde{f}}}
\newcommand{\EIG}{\Phi_{\mathrm{eig}}}
\newcommand{\defeq}{\vcentcolon=}
\newcommand{\deriv}[3]{\Delta_{{#1}}({#2}\,|\,{#3})}
\newcommand{\Ns}{n_s}
\newtheorem{theorem}{Theorem}[section]%
\undefined\theoremstyle{ejpecpbodyrm}\fi%
\newtheorem{remark}[theorem]{Remark}%
\begin{document}

\begin{frontmatter}

\title{On submodularity of the expected information gain}

\author[ncsu]{Steven Maio}
\ead{smaio@ncsu.edu}
\author[ncsu]{Alen Alexanderian}
\ead{alexanderian@ncsu.edu}

\affiliation[ncsu]{organization={North Carolina State University},
            city={Raleigh},
            state={NC},
            country={USA}}

\begin{abstract}
We consider finite-dimensional linear Gaussian Bayesian inverse problems with
uncorrelated sensor measurements.  In this setting, it is known that the
expected information gain, quantified by the expected Kullback--Leibler
divergence from the posterior measure to the prior measure, is submodular.  We present a simple
alternative proof of this fact tailored to a weighted inner product space
setting arising from discretization of infinite-dimensional 
inverse problems constrained by partial differential equations (PDEs). 
\end{abstract}

\begin{keyword}
    Bayesian inversion \sep Information gain \sep Optimal experimental design \sep Submodularity
\end{keyword}

\end{frontmatter}

\section{Introduction}
We consider the estimation of a parameter $\dpar \in \R^n$ from the observation model
\begin{equation}\label{equ:linmodel}
    \obs = \F \mkern1mu  \dpar + \vec{\eta}.
\end{equation}
Here, $\obs \in \R^q$ is a data vector, $\F$ is a linear transformation, 
and $\vec\eta \sim \GM{\vec{0}}{\noise}$ 
is a random vector that models measurement noise. 
We assume that $\vec{\eta}$ and $\dpar$ are independent.
Also, we assume the measurements are uncorrelated and therefore, 
$\noise = \mathrm{diag}(\sigma_1^2, \ldots, \sigma_q^2)$.
We consider the case where $\F$ is a linear transformation arising from
discretization of an infinite-dimensional linear inverse 
problem~\cite{bui2013computational}. In inverse problems governed by PDEs, 
applying $\F$ to a vector requires solving the
governing PDEs and extracting solution data at a set of measurement locations.  
The parameter space is $\hilb \defeq \R^n$ endowed with the 
inner product 
$\mip{\vec{u}}{\vec{v}} \defeq \vec{u}^\tran \mat{M} \vec{v}$, for $\vec u$ and $\vec v$ 
in $\hilb$, where $\mat{M}$ is a symmetric positive definite matrix. In the case 
of finite element discretization of inverse problems, $\mat{M}$ is the 
mass matrix~\cite{bui2013computational}.
Additionally, 
we assume a Gaussian prior, $\priorm = \GM{\dparpr}{\prcov}$, 
where
$\prcov$ is a strictly positive selfadjoint operator on $\hilb$.
The solution of the present linear Gaussian 
inverse problem is the Gaussian posterior, $\postm = \GM{\dparpost}{\postcov}$, with~\cite{Stuart10}
\begin{equation}\label{equ:postmeas}
    \dparpost = \postcov(\F^*\noise^{-1}\obs + \prcov^{-1}\dparpr)
\quad
\text{and}
\quad
    \postcov = (\HM + \prcov^{-1})^{-1},
\end{equation}
where $\HM = \F^* \noise^{-1} \F$. Note that here $\F$ is a linear transformation, 
$\F:\hilb \to \R^q$ and its adjoint $\F^*$ is given by $\F^* = \mat{M}^{-1}\F^\tran$;
see~\cite{bui2013computational} for details.

Data acquisition is a crucial consideration when solving inverse problems. 
We consider problems in which data are obtained from sensor measurements and
examine
the problem of
finding sensor placements that provide the most informative data.
This is an 
optimal experimental design (OED)~\cite{AtkinsonDonev92,Ucinski05} problem.
For a review of literature on OED for inverse problems governed by PDEs, 
see~\cite{Alexanderian21}.
A popular approach in this context is to seek measurements that 
maximize the expected information gain (EIG) defined as 
\begin{equation}\label{equ:avgKL} 
\text{EIG} \defeq  
\ave{\priorm}{\avey{\DKL(\postm\| \priorm)}}. 
\end{equation} 
Here, $\DKL$
denotes the Kullback--Leibler divergence~\cite{kullback:1951,ManningSchutze99} from the
posterior measure to the prior measure.
Also, $\mathbb{E}_{\priorm}$ and $\mathbb{E}_{{\vec{y}|\dpar}}$, respectively, denote expectation with respect to 
$\priorm$ and the likelihood measure, $\mu_{\obs | \dpar} \defeq \GM{\F \dpar}{\noise}$.
In the present linear Gaussian setting, EIG 
admits the analytic expression $\frac12 \log\det(\mat{I} + \HMt)$,
where $\HMt$ is the prior-preconditioned operator,
$\HMt = \prcov^{1/2}\HM\prcov^{1/2}$. For further details, 
see, e.g.,~\cite{AlexanderianGloorGhattas16,AlexanderianSaibaba18}. 

Let us denote, 
\begin{equation}\label{equ:EIG}
\EIG := \log\det(\mat{I} + \HMt).
\end{equation} 
We focus on the OED problem of finding an optimal subset of a 
prescribed cardinality from a set of $\Ns$ candidate
sensor locations such that $\EIG$ is maximized.
To make matters concrete, 
we consider
inverse problems governed by stationary PDEs where the
number $q$ of rows of $\F$ is equal to $\Ns$. 
The present setting covers the case of inverse problems governed by time-dependent 
PDEs in which sensor measurements are collected at the final time as well.

Subsequently, the OED problem can 
be formulated as
finding an optimal subset of the rows of $\F$.
Specifically, let
$V = \{1, \ldots, \Ns\}$ index the set of candidate sensor locations. Given $S
\subseteq V$, we denote by $\F(S)$ the matrix obtained by selecting rows of $\F$
indexed by $S$. This way, we can state $\EIG$ as a function of $S$. 
We thus consider the optimization problem
\begin{equation}\label{equ:OED}
\max_{\{S \subseteq V : |S|\leq k\}} \EIG(S),
\end{equation}
where $k \leq \Ns$ is a prescribed sensor budget.
This is a challenging optimization problem with combinatorial complexity. 
An exhaustive search for an optimal subset is impractical for even modest values of 
$\Ns$ and $k$. For example, with $\Ns = 80$ and $k = 10$, an exhaustive 
search for the solution of~\cref{equ:OED} requires 
${80\choose 10} = \mathcal{O}(10^{12})$ function evaluations. A popular approach 
for obtaining a near optimal solution is to follow a greedy strategy,
where sensors are chosen one at a time; at each step, we choose a sensor that 
results in the largest incremental increase in the value of $\EIG$. 
It is known that $\EIG$ considered above is a monotone submodular set function. 
This has been noted in various contexts; see,
e.g.,~\cite{KelmansKimelfeld83,GuestrinKrauseKrause05,KrauseSinghGuestrin08,ShamaiahBanerjeeVikalo10}
This observation enables drawing 
certain theoretical guarantees~\cite{NemhauserWolseyFisher78,KrauseGolovin14} 
regarding sensor placements obtained by 
applying a greedy procedure to the problem~\cref{equ:OED}.
Namely, if $S_k$ is obtained from 
the greedy algorithm, the following holds 
\begin{equation}\label{equ:greedy_thm}
\EIG(S_k) \geq (1-1/e) \max_{\{S \subseteq V : |S|\leq k\}} \EIG(S),
\end{equation}
where $e$ is the base of the natural logarithm. 

In this brief note, we provide a simple proof of monotonicity and submodularity
of $\EIG$.  The aim is to provide additional insight regarding the properties of
$\EIG$ for the PDE-based linear  problems under study.

\section{Preliminaries}\label{sec:preliminaries}

\subsection{Some linear algebra tools}
In what follows, we need to manipulate 
rank-one updates of invertible operators.  
Recall that 
the tensor product of $\vec{u}$ and $\vec{v}$ in $\hilb$, denoted 
by $\vec{u} \otimes \vec{v}$, is the rank-one linear operator satisfying
\[
(\vec{u} \otimes \vec{v})\vec{x}= \mip{\vec{v}}{\vec{x}}\vec{u}, \quad \vec{x} \in \hilb.
\]
With the present choice of inner product, 
$\vec{u} \otimes \vec{v} = \vec{u} \vec{v}^\tran \mat{M}$. 

For an invertible linear operator $\mat{A}$ on $\hilb$, we have
\begin{align}
    \det(\mat{A} + \vec{u} \otimes \vec{v}) &= \big(1 + \mip{\mat{A}^{-1}\vec{u}}{\vec{v}}\big) \det(\mat{A})\label{equ:Auov},\\
    \vspace{1mm}
    \left(\mat{A}+\vec{u} \otimes\vec{v} \right)^{-1} &= 
        \mat{A}^{-1} - \frac{\mat{A}^{-1}(\vec{u} \otimes \vec{v}) \mat{A}^{-1}}
        {1+\mip{\mat{A}^{-1}\vec{u}}{\vec{v}}}.\label{equ:smw}
\end{align}
Note that~\cref{equ:Auov} follows from the well-known formula for the determinant of 
a rank-one perturbation of a nonsingular matrix. And the identity~\cref{equ:smw}
follows from the Sherman--Morrison--Woodbury identity.
The latter assumes $1+\mip{\mat{A}^{-1}\vec{u}}{\vec{v}} \neq 0$. 

\subsection{Submodular functions}
Consider a finite set $V = \{1, \ldots, \Ns\}$ and a set function $f: 2^{V}\to
\mathbb{R}$. 
We first recall 
the notion of the marginal gain, which is also 
called the discrete derivative~\cite{KrauseGolovin14}. 
For $S \subseteq V$ and $v \in V$, we define the marginal 
gain of $f$ at $S$ with respect to $v$ by 
\[
\deriv{f}{S}{v} \defeq f(S \cup \{ v\}) - f(S).
\]
For example, in the sensor placement problem described in the introduction, 
$\deriv{\EIG}{S}{v}$ is the marginal benefit, in terms of expected information
gain, of taking a measurement  at the $v$th candidate sensor location. 

We say that $f$ is 
\begin{itemize}
\item \textit{monotone}, if for every $A\subseteq B
\subseteq V$, $f(A) \le f(B)$;
\item  
\textit{submodular}, if for every $A \subseteq B \subseteq V$ and $v \in V\backslash B$,
\begin{equation}\label{equ:submodularity}
    \deriv{f}{A}{v} \ge \deriv{f}{B}{v}.
\end{equation}
\end{itemize}
Note that Submodularity captures the idea of diminishing
returns~\cite{KrauseGolovin14}.  Proving submodularity can be reduced to the
case of one element increments~\cite[Theorem 44.1]{Schrijver03}. Namely, to prove $f$ is submodular, 
we need to verify that for every 
$A \subseteq V$ and distinct $v, w\in V\backslash A$
\begin{equation}\label{equ:incr-submodularity}
    \deriv{f}{A}{v} 
    \ge 
    \deriv{f}{A\cup\{w\}}{v}. 
\end{equation}

\section{The main result}
First, we consider a representation of $\EIG$ that 
facilitates understanding its properties. 
Let $\{\vec{e}_i\}_{i=1}^{\Ns}$ be the standard basis in $\R^{\Ns}$, 
and consider 
the operator $\HM$ in~\cref{equ:postmeas}. Note that
\begin{equation*}
\HM = \F^* \noise^{-1}\F 
= \F^* \Big(\sum_{i=1}^{\Ns} \sigma_i^{-2} \vec{e}_i \vec{e}_i^\tran\Big) \F 
= \sum_{i=1}^{\Ns} (\sigma_i^{-1}\F^*\vec{e}_i) (\sigma_i^{-1}\mat{M}^{-1}\F^\tran\vec{e}_i)^\tran\mat{M} 
= \sum_{i=1}^{\Ns} (\sigma_i^{-1}\F^*\vec{e}_i) (\sigma_i^{-1}\F^*\vec{e}_i)^\tran\mat{M}.
\end{equation*}
Thus, letting $\vec{f}_i := \sigma_i^{-1}\F^*\vec{e}_i$, 
we have $\HM = \sum_{i=1}^{\Ns} \ff_i \otimes \ff_i$.
Note that the $i$th term in this summation corresponds to
the $i$th candidate sensor location. 
This problem structure is a result 
of the assumption of uncorrelated measurements and has been noted 
in various contexts; see, e.g.,~\cite[Chapter 3]{Ucinski05} or 
the review~\cite{Alexanderian21}. 

We next note that the prior-preconditioned operator $\HMt$ in~\cref{equ:EIG}
admits the following convenient form:
\begin{equation}\label{equ:HMt_rep}
    \HMt = \sum_{i=1}^{\Ns} \fft_i \otimes \fft_i,
\end{equation}
where $\fft_i = \prcov^{1/2}\ff_i$, for $i \in \{1, \ldots, \Ns\}$. 
We can thus formulate the problem of finding an optimal
subset of candidate sensor locations as that of identifying an optimal subset of 
the terms in that summation.  
Specifically,
for a set $S \subseteq V \defeq \{1, \ldots, \Ns\}$, we define 
\[
    \HMt(S) \defeq \sum_{i \in S} \fft_i \otimes \fft_i.
\]
Also, for $S = \emptyset$, we let $\HMt$ be the zero matrix. 
Subsequently, we consider the problem of finding an optimal subset 
$S \subseteq V$ of cardinality $k \leq \Ns$ that maximizes, 
\begin{equation}\label{equ:EIG_S}
\EIG(S) = \log\det(\mat{I} + \HMt(S)).  
\end{equation} 

Since a zero row in $\F$ has no impact on 
$\EIG$, 
in what follows, we assume $\ff_i \neq \vec{0}$, 
for $i \in \{1, \ldots, \Ns\}$.
We are now ready to prove that $\EIG(S)$ is a monotone submodular function. 

\clearpage
\begin{theorem}\label{prp:monotone}
The following hold:
\begin{enumerate}[(i)]
\item $\EIG(\emptyset) = 0$;
\item $\EIG$ is strictly monotone; and
\item $\EIG$ is submodular.
\end{enumerate}
\end{theorem}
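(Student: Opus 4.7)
The plan is to reduce all three claims to a clean formula for the marginal gain and then exploit positivity properties of the operators $\mat{I} + \HMt(S)$. Part (i) is immediate: when $S = \emptyset$ we have $\HMt(\emptyset) = \vec{0}$ by convention, so $\EIG(\emptyset) = \log\det(\mat{I}) = 0$.

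For (ii), I would compute $\deriv{\EIG}{S}{v}$ directly using \cref{equ:Auov}. Setting $\mat{A} = \mat{I} + \HMt(S)$, the operator $\mat{A}$ is symmetric positive definite because $\mat{I}$ is and each rank-one term $\fft_i \otimes \fft_i$ is selfadjoint positive semidefinite in the $\mat{M}$-inner product. Applying \cref{equ:Auov} with $\vec{u} = \vec{v} = \fft_v$ gives
\begin{equation}\label{equ:margin}
\deriv{\EIG}{S}{v} = \log\bigl(1 + \mip{\mat{A}^{-1}\fft_v}{\fft_v}\bigr).
\end{equation}
Since $\mat{A}^{-1}$ is also selfadjoint positive definite and $\fft_v \neq \vec{0}$, the inner product is strictly positive, so $\deriv{\EIG}{S}{v} > 0$. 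This gives strict monotonicity.

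For (iii), I would invoke the one-element-increment characterization \cref{equ:incr-submodularity}: fix $A \subseteq V$ and distinct $v, w \in V \setminus A$, and let $\mat{A} = \mat{I} + \HMt(A)$ and $\mat{B} = \mat{A} + \fft_w \otimes \fft_w$. By \cref{equ:margin} applied to both $A$ and $A \cup \{w\}$, and using monotonicity of $\log$, it suffices to show
\[
\mip{\mat{A}^{-1}\fft_v}{\fft_v} \;\geq\; \mip{\mat{B}^{-1}\fft_v}{\fft_v}.
\]
Now I would plug in the Sherman--Morrison expression \cref{equ:smw} for $\mat{B}^{-1}$ (legal since $1 + \mip{\mat{A}^{-1}\fft_w}{\fft_w} > 0$). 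Expanding the rank-one correction using $(\fft_w \otimes \fft_w)\mat{A}^{-1}\fft_v = \mip{\fft_w}{\mat{A}^{-1}\fft_v}\,\fft_w$ and using selfadjointness of $\mat{A}^{-1}$, the difference collapses to
\[
\mip{\mat{A}^{-1}\fft_v}{\fft_v} - \mip{\mat{B}^{-1}\fft_v}{\fft_v} = \frac{\mip{\mat{A}^{-1}\fft_v}{\fft_w}^{\,2}}{1 + \mip{\mat{A}^{-1}\fft_w}{\fft_w}} \;\geq\; 0,
\]
which yields submodularity.

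The only real subtlety, and what I expect to be the main bookkeeping obstacle, is carefully handling the non-standard $\mat{M}$-inner product when expanding the tensor product $\fft_w \otimes \fft_w$ and invoking selfadjointness: one must use that $\mat{A}$ is selfadjoint with respect to $\mip{\cdot}{\cdot}$ (not merely symmetric as a matrix), so that the cross term in the Sherman--Morrison expansion reassembles into a single squared inner product. Once that identification is made, the nonnegativity is manifest and the argument is short.
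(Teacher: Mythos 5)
Your proposal is correct and follows essentially the same route as the paper's proof: the marginal gain formula $\deriv{\EIG}{S}{v} = \log\bigl(1 + \mip{\mat{A}^{-1}\fft_v}{\fft_v}\bigr)$ via the rank-one determinant identity, and the Sherman--Morrison expansion showing the gain drops by exactly $\mip{\mat{A}^{-1}\fft_v}{\fft_w}^2/\bigl(1+\mip{\mat{A}^{-1}\fft_w}{\fft_w}\bigr) \geq 0$ when $w$ is added first. Your remark about selfadjointness in the $\mat{M}$-inner product is the right subtlety to flag, and the paper handles it implicitly through the notation $\alpha_{ij} = \mip{\mat{A}^{-1}\fft_i}{\fft_j}$.
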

\begin{proof}
The first statement follows immediately from definition of $\EIG$. 
In the rest of the proof we let $S$ be an arbitrary subset of $V$ and define 
\[
\mat{A} \defeq \mat{I} + \HMt(S).
\] 
Moreover, 
for $i, j \in V$, we denote 
$\alpha_{ij} = \mip{\mat{A}^{-1}\fft_i}{\fft_j}$.

To prove the strict monotonicity, it is sufficient to show that 
for every $v \in V \setminus S$, 
$\EIG(S \cup \{v\}) > \EIG(S)$.
To see this, we first 
use~\cref{equ:Auov} to note 
\begin{equation}\label{equ:avv}
    \det(\mat{A} + \fft_v \otimes \fft_v) = 
    \big(1 + \mip{\mat{A}^{-1}\fft_v}{\fft_v} \big) \det(\mat{A})
    = (1+\avv)\det(\mat{A}). 
\end{equation}
Also, since $\mat{A}^{-1}$ is strictly positive and $\fft_v \neq 0$, we know 
$\avv > 0$.
Subsequently,
\[
\begin{aligned}
    \EIG(S \cup \{v\}) - \EIG(S)
    &= \log\det(\mat{I} + \HMt(S) + \fft_v \otimes \fft_v) - \log\det(\mat{I} + \HMt(S))\\
    &= \log\det(\mat{A} + \fft_v \otimes \fft_v) - \log\det(\mat{A})\\
    &= \log\big( \det(\mat{A} + \fft_v \otimes \fft_v)  \det(\mat{A})^{-1} \big) \\
    &= \log(1+\avv) > 0.
\end{aligned}
\]

Finally, we show $\EIG$ is submodular.
Let $v$ and $w$ be distinct elements of $V\setminus S$. As we noted above, 
\[
\deriv{\EIG}{S}{v} = 
\EIG(S \cup \{v\}) - \EIG(S)
= \log\big( 1 + \avv). 
\]
We next note that 
\begin{equation}\label{equ:expand_ss}
        \det(\mat{A} +\fft_v \otimes \fft_v+ \fft_w \otimes \fft_w )= 
                                                                      \big(1 + \mip{(\mat{A} + \fft_w \otimes \fft_w)^{-1}\fft_v}{\fft_v} \big) \det(\mat{A} + \fft_w \otimes \fft_w). 
\end{equation}
Now, by~\cref{equ:smw},
\begin{equation}\label{equ:expand_smw}
\left(\mat{A}+\fft_w \otimes\fft_w \right)^{-1}
    =\mat{A}^{-1} - \frac{(\mat{A}^{-1}\fft_w) \otimes (\mat{A}^{-1}\fft_w)} 
    {1+\mip{\mat{A}^{-1}\fft_w}{\fft_w}}.
\end{equation}
This implies
$\mip{(\mat{A} + \fft_w \otimes \fft_w)^{-1}\fft_v}{\fft_v}
= \avv - {\avw^2}/(1+ \aww)$. Note also that by strict positivity of 
$(\mat{A} + \fft_w \otimes \fft_w)^{-1}$, we know $\avv - {\avw^2}/(1+ \aww) > 0$. 
Subsequently,
\[
    \det(\mat{A} + \fft_v \otimes \fft_v + \fft_w \otimes \fft_w)
    = 
    (1 + \avv - \avw^2/(1+\aww))\det(\mat{A} + \fft_w \otimes \fft_w).
\]
Thus, we have 
\[
\begin{aligned}
    \deriv{\EIG}{S \cup\{w\}}{v} = 
    \EIG(S \cup&\{ v, w\})-\EIG(S \cup \{ w\}) \\
        &= \log\det(\mat{A} + \fft_v \otimes \fft_v + \fft_w \otimes \fft_w ) - 
           \log\det(\mat{A} + \fft_w \otimes \fft_w)\\
        &= 
        \log\big[ (1 + \avv - \avw^2/(1+\aww))\det(\mat{A} + \fft_w \otimes \fft_w) \det(\mat{A} + \fft_w \otimes \fft_w)^{-1}\big]
        \\
        &= \log(1 + \avv - \avw^2/(1+\aww)).
\end{aligned}
\]
Therefore, since $\avw^2/(1+\aww) \geq 0$, 
\[
    \deriv{\EIG}{S \cup\{w\}}{v} = 
    \log(1 + \avv - \avw^2/(1+\aww))  
   \leq \log(1 + \avv)  = \deriv{\EIG}{S}{v}. \qedhere
\]
\end{proof}
\begin{remark}
A key to showing submodularity of $\EIG$ was 
to state the prior-preconditioned operator $\HMt$ as a sum of 
rank-one operators. This in turn was possible, due to 
assumption of uncorrelated measurements. 
\end{remark}

\bibliographystyle{elsarticle-num}
\bibliography{refs.bib}

\end{document}